\documentclass[11pt]{amsart}

\usepackage[T1]{fontenc}
\usepackage{lmodern}
\usepackage[margin=1.1in]{geometry}
\usepackage{amsmath, amssymb, amsthm, mathtools}
\usepackage{keytheorems}
\usepackage{xcolor}
\usepackage[
  activate={true,nocompatibility},
  final,
  protrusion=true,
  expansion=true,
  tracking=true,
  kerning=true,
  spacing=true,
  nopatch=footnote,
  factor=1100,
  stretch=10,
  shrink=10
]{microtype}
\microtypecontext{spacing=nonfrench}
\SetTracking{encoding={*},shape=sc}{40}
\usepackage{hyperref}
\hypersetup{colorlinks=true, linkcolor=blue!55!black, citecolor=green!45!black}

\newkeytheorem{theorem}[parent=section]
\newkeytheorem{proposition,lemma,corollary}[sibling=theorem]
\newkeytheorem{definition}[sibling=theorem,style=definition]
\newkeytheorem{remark}[sibling=theorem,style=remark]

\newcommand{\fgr}[1]{[#1\mkern1.5mu]}
\newcommand{\lfgr}[1]{[#1\mkern1.5mu]_{1}}
\newcommand{\R}{\mathbb{R}}
\newcommand{\Z}{\mathbb{Z}}
\newcommand{\LL}{\mathrm{L}}
\newcommand{\1}{\mathbf{1}}
\newcommand{\eqr}{\mathcal{R}}
\newcommand{\Aut}{\mathrm{Aut}}
\newcommand{\abs}[1]{\left\lvert #1\right\rvert}
\newcommand{\norm}[1]{\lVert #1\rVert}
\newcommand{\acts}{\curvearrowright}
\newcommand{\symdiff}{\mathbin{\vartriangle}}

\title[The analogue of Belinskaya's theorem for measure-preserving flows]{The analogue of Belinskaya's theorem \\ for measure-preserving flows}
\author{Konstantin Slutsky}
\address{Department of Mathematics, Iowa State University, Ames, IA 50011, USA}
\thanks{The author gratefully acknowledges the support of the National Science Foundation under
  grant~\texttt{DMS-2153981}.}
\subjclass[2020]{Primary 37A10; Secondary 37A05, 37A15, 37A20}
\keywords{\(\LL^{1}\) full groups, orbit equivalence, Belinskaya's
  theorem, commensurated sets}

\begin{document}

\begin{abstract}
  We prove the analogue of Belinskaya's theorem for measure-preserving flows: two free ergodic
  measure-preserving flows whose \(\LL^{1}\) full groups are isomorphic as abstract groups are
  conjugate up to a scalar time change.  This answers a question posed by Fran\c{c}ois Le
  Ma\^{\i}tre and the author.  We show that whenever two free ergodic flows generate the same orbit
  equivalence relation and one is contained in the other's \(\LL^{1}\) full group, their positive
  half-orbits are commensurate after possibly reversing time.  Katznelson's criterion then yields
  conjugacy after a scalar time change.

  The key new ingredient is a commensuration criterion asserting that a measurable subset of the
  real line whose symmetric differences with its translates have finite average measure over the
  unit interval is commensurate with exactly one of the empty set, the whole line, and the two
  half-lines.  This criterion and its application were discovered autonomously by a two-agent AI
  system.  The author independently verified the proofs and prepared the final text.
\end{abstract}

\maketitle

\section{Introduction}
\label{sec:introduction}
A recurring theme in ergodic theory is to understand how much of a dynamical system is remembered by
its partition into orbits.  Full groups provide a natural algebraic framework for this question.  To
recall their definition, consider a Borel measure-preserving action of a Polish group on a standard
probability space \((X,\mu)\).  The full group of the action consists of all measure-preserving
transformations of \(X\) that send almost every point to a point in the same orbit.  Introduced by
Dye~\cite{Dye} in the context of actions of countable groups, full groups provide algebraic
invariants of orbit equivalence.  Orbit full groups associated with actions of general Polish
groups, and with actions of locally compact groups in particular, were studied
in~\cite{CLM-more,CLM-locally-compact}.  Since Dye's work, an important line of research has sought
to understand how dynamical properties of an action are reflected in algebraic properties of its
full group and of distinguished subgroups of it.  This viewpoint has led to variants of the
full-group construction in which one restricts how transformations are allowed to move points inside
their orbits, both within the context of ergodic theory~\cite{CJLT,LM,Rud} and in topological
dynamics~\cite{BM,GPS}.  Such restrictions can retain dynamical information that the full group
itself forgets.

The \(\LL^{1}\) full group is one such variant.  Suppose that a normed Polish group \(G\) acts freely
and by measure-preserving transformations on \((X,\mu)\).  Every element \(T\) of the full group has
a cocycle \(\rho_{T}:X\to G\), defined by \(Tx=\rho_{T}(x)\cdot x\), and the \(\LL^{1}\) full group
consists of those \(T\) for which
\[\norm{T}_{1}=\int_X\norm{\rho_T(x)}\,d\mu(x)<\infty.\]
Le Ma\^{\i}tre introduced this concept for measure-preserving transformations~\cite{LM}.  In
joint work with the author~\cite{LMS}, it was extended to Borel actions of general Polish normed
groups.  The \(\LL^{1}\) norm induces a Polish group topology, and abstractly isomorphic \(\LL^{1}\)
full groups of ergodic actions of compactly generated locally compact Polish groups determine the
same \(\LL^{1}\) orbit equivalence class~\cite[Prop.~4.21]{LMS}.  Here two actions are \(\LL^{1}\)
orbit equivalent if, after conjugating one of them, they have the same orbits and each action is
contained in the \(\LL^{1}\) full group of the other.  Thus \(\LL^{1}\) full groups carry both a
rich topological group structure and substantially finer dynamical information than ordinary full
groups.

The pivotal rigidity result in this direction is Belinskaya's theorem for \(\Z\)-actions~\cite{Bel}.
Dye's orbit-equivalence theorem~\cite{Dye} implies that the ordinary full groups of all free ergodic
measure-preserving \(\Z\)-actions are isomorphic, so the full group alone cannot recover the
generating transformation.  Belinskaya's theorem shows that the integrability restriction changes
the picture completely: two ergodic measure-preserving transformations that are \(\LL^{1}\) orbit
equivalent are flip conjugate.  Equivalently, in the full-group formulation of~\cite{LM}, if
\(T_{1},T_{2} \in \Aut(X,\mu)\) are ergodic and have isomorphic \(\LL^{1}\) full groups, then
\(T_{1}\) is conjugate to either \(T_{2}\) or \(T_{2}^{-1}\).  Hence the \(\LL^{1}\) full group
remembers an ergodic \(\Z\)-action up to a change of orientation.  The integrability assumption in
Belinskaya's theorem
was shown to be sharp by Carderi, Joseph, Le Ma\^{\i}tre, and Tessera~\cite{CJLT}.

A particularly clean route to Belinskaya's theorem is provided by a conjugacy criterion of
Katznelson; see~\cite[Appendix~A]{CJLT}.  If two aperiodic measure-preserving transformations
\(T_{1}\) and \(T_{2}\) generate the same orbit equivalence relation and their positive half-orbits
have finite symmetric difference,
\[\abs{\{T_{1}^{n}x:n\ge 0\}\symdiff\{T_{2}^{n}x:n\ge 0\}}<\infty \quad \text{for almost every \(x\),}\]
then they are conjugate by an element of their common full group.  The integrability assumption is
then converted into commensuration of the positive half-orbits, and commensuration yields
conjugacy (cf.~\cite[Cor.~3.7]{Bel}).

The work~\cite{LMS} established an analogue of Katznelson's criterion for flows.  Suppose that two
free measure-preserving flows \(\mathcal F_1\) and \(\mathcal F_2\) share their orbits and induce
the same Lebesgue measures \(\lambda_x\) on them.  Let \(s_i(x)\) denote the right half-orbit of
\(x\) under \(\mathcal F_i\).  If
\[\lambda_x\bigl(s_1(x)\symdiff s_2(x)\bigr)<\infty \quad \text{for almost every \(x\),}\]
then the two flows are conjugate~\cite[Thm.~10.9]{LMS}.  This reduces a global conjugacy problem to
a concrete commensuration problem on almost every orbit.

This criterion led to two questions in~\cite{LMS}.  The first asks whether two free ergodic
measure-preserving flows with isomorphic \(\LL^{1}\) full groups must be conjugate after rescaling
time by a nonzero scalar.  This is the strongest conclusion one can expect, since time rescaling
does not change the \(\LL^{1}\) full group.  The second asks whether two ergodic flows with the same
orbits and the same \(\LL^{1}\) full group must, after matching their orbit measures and possibly
reversing time, satisfy the hypothesis of the flow version of Katznelson's criterion.  These are
Questions~10.17 and~10.10 of~\cite{LMS}, respectively.

The present note answers both questions affirmatively.  The first main result is the following flow
version of Belinskaya's theorem~\cite[Cor.~3.7]{Bel}.  For a flow \(\mathcal F\), write
\(\eqr_{\mathcal F}\) for its orbit equivalence relation and \(\lfgr{\mathcal F}\) for its
\(\LL^{1}\) full group.  In subgroup inclusions, we identify \(\mathcal F\) with its subgroup of time
maps; thus \(\mathcal F' \le \lfgr{\mathcal F}\) means that every time map of \(\mathcal F'\) belongs
to \(\lfgr{\mathcal F}\).  Finally, \(\R^{*} = \R \setminus \{0\}\), and
\(m_{\alpha} : \R \to \R\) is the automorphism \(r \mapsto \alpha r\).

\getkeytheorem{belinskaya-flows-same-orbits}

As in the case of \(\Z\)-actions~\cite{Bel}, the hypothesis is one-sided: equality of the two
\(\LL^{1}\) full groups, or even the reverse inclusion, is not required.

Combining this result with the reconstruction of \(\LL^{1}\) orbit equivalence from abstract group
isomorphisms gives the following theorem (cf.~\cite{LM}).

\getkeytheorem{belinskaya-flows}

It was shown in~\cite[Cor.~10.16]{LMS} that ergodic flows with abstractly isomorphic \(\LL^{1}\)
full groups are flip Kakutani equivalent, meaning that they admit cross-sections whose induced
transformations are flip conjugate (see also~\cite{ORW} for background on Kakutani equivalence and
flows).  Conjugacy up to a scalar time change is considerably stronger: it recovers the entire flow,
rather than only the dynamics induced on suitable cross-sections.

The new ingredient of the present work is the commensuration criterion of
Section~\ref{sec:cut-lemma}.  It converts the \(\LL^{1}\) bound furnished by the inclusion of one
flow in the other's \(\LL^{1}\) full group into commensuration of their half-orbits, allowing the
flow version of Katznelson's criterion to be applied.

Commensuration conditions have been studied extensively, especially in the context of discrete
groups.  The general theory of commensurating group actions was developed in an excellent survey by
Cornulier~\cite{Cor}.  We also mention the characterization of property~\((T)\) in terms of
commensuration by Robertson and Steger~\cite{RS}.  In measured dynamics, Derimay recently
introduced the commensurating full group of a measure-class-preserving transformation and
established its relation to flip conjugacy~\cite{Der}.

\subsection*{Acknowledgment}
\label{sec:acknowledgment}
The author is grateful to Fran\c{c}ois Le Ma\^{\i}tre for useful comments on the earlier draft of
this work.

\subsection*{Use of AI tools}
The results of Section~2 and their application to the commensuration
question~\cite[Ques.~10.10]{LMS} via the norm formula~\cite[Prop.~6.8]{LMS} were discovered by Codex
(GPT-5.5, OpenAI) in an autonomous two-agent research loop with Claude (Opus~4.8, Anthropic).  The
agents alternately proposed, criticized, and revised candidate arguments.  The author formulated the
task, configured and initiated the loop, independently verified every mathematical step, edited the
final proofs, and assumes full responsibility for the manuscript.  Proofreading and stylistic
editing were performed with Codex (GPT-5.6-sol, OpenAI).

\section{A commensuration criterion}
\label{sec:cut-lemma}

For measurable sets \(A, B \subseteq \R\), write
\[A \symdiff B = (A \setminus B) \cup (B \setminus A)\]
for their symmetric difference.  The sets are commensurate if \(\lambda(A \symdiff B) < \infty\),
where \(\lambda\) denotes Lebesgue measure.  For \(t \in \R\), we also write \(A+t=\{a+t:a\in A\}\).
Commensuration is an equivalence relation, and all sets of finite measure form a single class.
Lemma~\ref{lem:cut} and the commensuration criterion of
Corollary~\ref{cor:commensuration-criterion}, established in this section, constitute the main new
ingredient of this note.  Apart from these two statements, the proof of
Theorem~\ref{thm:belinskaya-flows} relies only on results of~\cite{LMS} and standard measure theory.

To motivate the criterion, consider first its discrete analogue.  Let \(A \subseteq \Z\) be a set of
integers commensurate with its translate, \(\abs{A \symdiff (A + 1)} < \infty\).  Writing \(\1_A\)
for the indicator function of \(A\), the set \(A \symdiff (A + 1)\) consists of those \(k \in \Z\)
at which the indicator changes value, \(\1_{A}(k) \ne \1_{A}(k - 1)\), so its finiteness means that
the bi-infinite \(0\)--\(1\) sequence
\((\1_{A}(k))_{k \in \Z}\) changes only finitely often, hence is eventually constant in each
direction.  Writing \(\ell_{-} = \lim_{k \to -\infty} \1_{A}(k)\) and
\(\ell_{+} = \lim_{k \to +\infty} \1_{A}(k)\), the four possibilities
\((\ell_{-}, \ell_{+}) \in \{0,1\}^{2}\) correspond exactly to \(A\) being commensurate with
\[\varnothing, \qquad \Z, \qquad \Z_{\le 0} = \{k : k \le 0\}, \qquad \Z_{\ge 0} = \{k : k \ge 0\},\]
when \((\ell_{-}, \ell_{+})\) equals \((0,0), (1,1), (1,0), (0,1)\), respectively.  The criterion
below is the continuous counterpart of this example.  As there is no single distinguished unit by
which to translate in \(\R\), the role of ``finitely many changes'' is played by the finiteness of
the integral
\[\int_{0}^{1} \lambda\bigl(A \symdiff (A + t)\bigr)\, dt.\]

\begin{lemma}
  \label{lem:cut}
  Let \(A \subseteq \R\) be a measurable set satisfying
  \[I = \int_0^1 \lambda\bigl(A \symdiff (A + t)\bigr)\,dt < \infty.\]
  For \(k \in \Z\), let \(a_k = \lambda(A \cap [k,k+1))\) be the measure of \(A\) in the interval
  \([k,k+1)\).  Then
  \[\sum_{k \in \Z} a_k(1-a_k) \le I \quad \text{and} \quad
    \sum_{k \in \Z} \abs{a_{k+1}-a_k} \le 2I.\]
\end{lemma}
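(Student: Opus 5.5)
The plan is to rewrite \(I\) as a double integral over pairs of points at distance at most one, and then bound each sum by integrating over disjoint regions of that set of pairs. Write \(f = \1_{A}\). For each \(t\), \(\lambda(A \symdiff (A+t)) = \int_{\R} \abs{f(x) - f(x-t)}\,dx\). By Tonelli's theorem,
\[
I = \iint_{D} \abs{f(x) - f(y)}\,dx\,dy, \qquad D = \{(x,y) \in \R^{2} : 0 \le x - y \le 1\}.
\]
Both inequalities will come from exhibiting disjoint subregions of \(D\), or controlled integrals over \(D\), whose contributions dominate the terms of the sums.

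\emph{First inequality.} Fix \(k\) and let \(Q_{k} = [k,k+1)^{2}\). The half-square \(Q_{k}^{<} = \{(x,y) \in Q_{k} : y \le x\}\) lies in \(D\). By the symmetry \((x,y) \mapsto (y,x)\), the integral of \(\abs{f(x) - f(y)}\) over \(Q_{k}^{<}\) is half of its integral over \(Q_{k}\). On \(Q_{k}\) the integrand equals \(1\) exactly when one point lies in \(A\) and the other does not. Hence the integral over \(Q_{k}\) is \(2a_{k}(1-a_{k})\), and the integral over \(Q_{k}^{<}\) is \(a_{k}(1-a_{k})\). The sets \(Q_{k}^{<}\), \(k \in \Z\), are pairwise disjoint subsets of \(D\), so summing over \(k\) gives \(\sum_{k} a_{k}(1-a_{k}) \le I\).

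\emph{Second inequality.} For every \(t \in [0,1]\) we have \(a_{k+1} - a_{k} = \int_{k}^{k+1} \bigl(f(x+1) - f(x)\bigr)\,dx\). Insert the intermediate point \(x+t\), so that
\[
f(x+1) - f(x) = \bigl(f(x+1) - f(x+t)\bigr) + \bigl(f(x+t) - f(x)\bigr).
\]
Averaging over \(t \in [0,1]\) then gives
\[
\abs{a_{k+1}-a_{k}} \le \int_{0}^{1}\!\!\int_{k}^{k+1} \abs{f(x+1) - f(x+t)}\,dx\,dt
+ \int_{0}^{1}\!\!\int_{k}^{k+1} \abs{f(x+t) - f(x)}\,dx\,dt .
\]
The second term is the integral of \(\abs{f(u) - f(v)}\) over the pairs \((u,v) = (x+t, x) \in D\) with \(v \in [k,k+1)\). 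As \(k\) ranges over \(\Z\), these regions partition \(D\), so summing the second term over \(k\) gives exactly \(I\). For the first term, substitute \(y = x+1\) and \(s = 1-t\). It becomes the integral over pairs \((y, y-s) \in D\) with \(y \in [k+1,k+2)\). Summed over \(k\), these regions again partition \(D\), so this term also sums to \(I\). Together, \(\sum_{k} \abs{a_{k+1} - a_{k}} \le 2I\).

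The main obstacle is the second inequality. A naive comparison of the two adjacent unit intervals uses pairs at distance up to \(2\), which lie outside \(D\). The trick of averaging over an intermediate point \(x+t\) is what keeps every pair within distance one. It also makes both resulting families of regions tile \(D\) exactly, which yields the constant \(2\) without any loss.
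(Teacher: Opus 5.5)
Your proof is correct. Your first inequality is the paper's argument: the half-squares \(Q_{k}^{<}\) are exactly the regions that define the paper's quantities \(S_{k}=a_{k}(1-a_{k})\).

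For the second inequality you take a different and shorter route. The paper splits \(I=\sum_{k}(S_{k}+C_{k})\) into within-interval and cross-interval pieces. It then bounds \(\abs{a_{k+1}-a_{k}}\) by the integral of \(\abs{\1_{A}(k+1+v)-\1_{A}(k+u)}\) over all \((u,v)\in[0,1)^{2}\), which involves pairs at distance up to \(2\). The half \(v<u\) is identified with \(C_{k}\). The far half is controlled by a three-term triangle inequality through two intermediate points, giving \(2C_{k}+S_{k}+S_{k+1}\), which sums to \(2I\).

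You instead compare \(\1_{A}(x+1)\) with \(\1_{A}(x)\) at the same offset and pass through one intermediate point \(x+t\), averaged over \(t\in[0,1]\). Each of the two resulting sums is exactly \(I\) by translation invariance, so no bookkeeping with \(S_{k}\) and \(C_{k}\) is needed. Stripped of coordinates, your argument has three steps:
\begin{itemize}
\item the triangle inequality \(\lambda\bigl(A\symdiff(A+1)\bigr)\le\lambda\bigl(A\symdiff(A+t)\bigr)+\lambda\bigl(A\symdiff(A+1-t)\bigr)\);
\item averaging over \(t\), which gives \(\lambda\bigl(A\symdiff(A+1)\bigr)\le 2I\);
\item the elementary bound \(\sum_{k}\abs{a_{k+1}-a_{k}}\le\lambda\bigl(A\symdiff(A+1)\bigr)\).
\end{itemize}

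The paper's version has the mild advantage of deriving both inequalities from a single decomposition of \(I\). Yours is more transparent, gives the same constant \(2\), and yields the bound \(\lambda\bigl(A\symdiff(A+1)\bigr)\le 2I\) as a by-product.
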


\begin{proof}
  For every \(t \in \R\), we have
  \begin{displaymath}
    \begin{aligned}
    \lambda\bigl(A \symdiff (A+t)\bigr) &= \int_{\R} \abs{\1_{A}(y) - \1_{A}(y-t)}\, dy \\
    [x = y-t] &= \int_{\R} \abs{\1_{A}(x) - \1_{A}(x+t)}\, dx.
    \end{aligned}
  \end{displaymath}
  Substituting
  \(x = k+u\), where \(k \in \Z\) and \(u \in [0,1)\), and applying Tonelli's theorem gives
  \[I = \int_{0}^{1}\!\int_{\R} \abs{\1_{A}(x) - \1_{A}(x+t)}\, dx\, dt = \sum_{k \in \Z}
    \int_{0}^{1}\!\int_{0}^{1} \abs{\1_{A}(k+u) - \1_{A}(k+u+t)}\, dt\, du.\]
  For \(k \in \Z\) and \(u \in [0,1)\), write \(f_{k}(u) = \1_{A}(k+u)\) for the indicator of \(A\)
  on the \(k\)-th unit interval.  For \((u,t) \in [0,1)^{2}\), the point \(k+u+t\) stays in
  \([k,k+1)\) when \(u+t < 1\), where \(\1_{A}(k+u+t) = f_{k}(u+t)\), and moves into \([k+1,k+2)\)
  when \(u+t \ge 1\), where \(\1_{A}(k+u+t) = f_{k+1}(u+t-1)\).  We split the inner integral
  accordingly.

  For the ``\(u+t < 1\)'' region,
  \[
    \begin{aligned}
      S_{k} = \iint\limits_{\mathclap{\substack{(u,t) \in [0,1)^{2}\\ u+t<1}}} \abs{f_{k}(u) - f_{k}(u+t)}\,dt\,du
      &= \int_{0}^{1}\!\int_{u}^{1} \abs{f_{k}(u) - f_{k}(v)}\,dv\,du \\
      \text{[by symmetry]} &= \frac{1}{2} \int_{0}^{1}\!\int_{0}^{1} \abs{f_{k}(u) - f_{k}(v)}\,dv\,du = a_{k}(1-a_{k}),
    \end{aligned}
  \]
  where the last equality holds because \(f_{k}\) is \(\{0,1\}\)-valued, so the integrand equals
  \(1\) on a set of measure \(2a_{k}(1-a_{k})\).  Hence
  \(\sum_{k \in \Z} a_{k}(1-a_{k}) = \sum_{k \in \Z} S_{k} \le I\).

  For the ``\(u+t \ge 1\)'' region, substituting \(v = u+t-1\),
  \[
    C_{k} = \iint\limits_{\mathclap{\substack{(u,t) \in [0,1)^{2}\\ u+t \ge 1}}} \abs{f_{k}(u) -
      f_{k+1}(u+t-1)}\,dt\,du = \int_{0}^{1}\!\int_{0}^{u} \abs{f_{k}(u) - f_{k+1}(v)}\,dv\,du.
  \]
  To bound the variation of \((a_{k})_{k}\), write
  \(a_{k+1} - a_{k} = \int_{0}^{1}\!\int_{0}^{1} \bigl(f_{k+1}(v) - f_{k}(u)\bigr)\,dv\,du\) and
  split the inner integral at \(v = u\):
  \[
    \abs{a_{k+1} - a_{k}} \le \int_{0}^{1}\!\int_{0}^{1} \abs{f_{k+1}(v) - f_{k}(u)}\,dv\,du = C_{k}
    + \int_{0}^{1}\!\int_{u}^{1} \abs{f_{k+1}(v) - f_{k}(u)}\,dv\,du.
  \]
  For the second piece, the inequality
  \[
    \abs{f_{k+1}(v) - f_{k}(u)} \le \abs{f_{k+1}(v) - f_{k+1}(u)} + \abs{f_{k+1}(u) - f_{k}(v)} +
    \abs{f_{k}(v) - f_{k}(u)},
  \]
  integrated over \(\{0 \le u \le v \le 1\}\), gives
  \[
    \int_{0}^{1}\!\int_{u}^{1} \abs{f_{k+1}(v) - f_{k}(u)}\,dv\,du \le S_{k+1} + C_{k} + S_{k}.
  \]
  Here the middle term \(\int_{0}^{1}\!\int_{u}^{1} \abs{f_{k+1}(u) - f_{k}(v)}\,dv\,du\) equals
  \(C_{k}\) after exchanging the roles of \(u\) and~\(v\).  Summing over \(k\),
  \[
    \sum_{k \in \Z} \abs{a_{k+1} - a_{k}} \le \sum_{k \in \Z} \bigl(C_{k} + S_{k+1} + C_{k} +
    S_{k}\bigr) = 2 \sum_{k \in \Z} C_{k} + 2 \sum_{k \in \Z} S_{k} = 2I. \qedhere
  \]
\end{proof}

Lemma~\ref{lem:cut} is the key technical tool behind the following commensuration criterion, which
is the statement ultimately needed in Section~\ref{sec:proof-belinskaya-flows}.
\begin{corollary}
  \label{cor:commensuration-criterion}
  If \(A \subseteq \R\) is a measurable set satisfying
  \[I = \int_0^1 \lambda\bigl(A \symdiff (A + t)\bigr)\,dt < \infty,\]
  then \(A\) is commensurate with exactly one of \(\varnothing\), \(\R\), \((-\infty,0]\), and
  \([0,\infty)\).
\end{corollary}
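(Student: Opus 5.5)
We derive the corollary from Lemma~\ref{lem:cut}, following the discrete model from the beginning of this section. The lemma gives \(\sum_k \abs{a_{k+1}-a_k} \le 2I < \infty\), so \((a_k)\) has bounded variation and the one-sided limits \(\ell_{\pm} = \lim_{k\to\pm\infty} a_k\) exist. The lemma also gives \(\sum_k a_k(1-a_k) \le I < \infty\), so \(a_k(1-a_k)\to 0\) as \(\abs{k}\to\infty\). Hence \(\ell_{\pm}(1-\ell_{\pm}) = 0\), that is, \(\ell_{\pm}\in\{0,1\}\).

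The next step is to show that \(A\) is commensurate with the model set \(M\) determined by \((\ell_-,\ell_+)\): take \(\varnothing\), \(\R\), \((-\infty,0]\), \([0,\infty)\) for \((0,0),(1,1),(1,0),(0,1)\), respectively. Fix \(K\) so large that for \(k\ge K\) we have \(a_k\) within \(1/2\) of \(\ell_+\), and for \(k\le -K\) we have \(a_k\) within \(1/2\) of \(\ell_-\).
\begin{itemize}
\item For \(k \ge K\) with \(\ell_+=0\), we have \(a_k<1/2\), so \(a_k \le 2a_k(1-a_k)\).
\item For \(k \ge K\) with \(\ell_+=1\), we have \(1-a_k \le 2a_k(1-a_k)\).
\end{itemize}
In either case \(\lambda\bigl((A\symdiff M)\cap[k,k+1)\bigr)\) equals \(a_k\) or \(1-a_k\), so it is at most \(2a_k(1-a_k)\). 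The same bound holds for \(k\le -K\). The finitely many intervals with \(\abs{k}<K\) contribute at most \(2K\). Therefore
\[\lambda(A\symdiff M)\le 2K + 2\sum_k a_k(1-a_k) \le 2K+2I<\infty.\]

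For uniqueness, note that commensuration is an equivalence relation, so it suffices to check that the four model sets are pairwise non-commensurate. Any two of them differ on a set containing a half-line (or all of \(\R\)), which has infinite measure. Hence exactly one of them is commensurate with \(A\).

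There is no real obstacle here, since Lemma~\ref{lem:cut} does the heavy lifting. The only points needing care are the following. First, the existence of the limits \(\ell_\pm\) really uses the variation bound; the summability of \(a_k(1-a_k)\) alone only forces each \(a_k\) to be near \(0\) or \(1\) and would allow infinitely many switches between them. Second, the local estimate must bound \(\lambda\) of the symmetric difference on each unit interval by a constant multiple of \(a_k(1-a_k)\), which is exactly what the choice of \(K\) guarantees.
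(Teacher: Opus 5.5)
Your proposal is correct and follows essentially the same route as the paper. Lemma~\ref{lem:cut} gives bounded variation, so the limits \(\ell_\pm\) exist, and it gives summability of \(a_k(1-a_k)\), so \(\ell_\pm \in \{0,1\}\); the tails are then controlled by \(a_k \le 2a_k(1-a_k)\) or \(1-a_k \le 2a_k(1-a_k)\), and uniqueness follows because the four reference sets are pairwise noncommensurate. The only difference is cosmetic: you handle both tails at once with a single cutoff \(K\) and the explicit bound \(2K+2I\), whereas the paper treats the positive and negative directions separately.
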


\begin{proof}
  For \(k \in \Z\), put \(a_{k} = \lambda\bigl(A \cap [k,k+1)\bigr) \in [0,1]\).  By Lemma~\ref{lem:cut},
  \begin{equation}
    \label{eq:cut-summable}
    \sum_{k \in \Z} a_{k}(1 - a_{k}) < \infty \qquad \text{and} \qquad
    \sum_{k \in \Z} \abs{a_{k+1} - a_{k}} < \infty.
  \end{equation}
  The finiteness of the second sum means that \((a_{k})_{k}\) has finite total variation, so its
  positive and negative tails are Cauchy.  The limits
  \[\ell_{+} = \lim_{k \to +\infty} a_{k} \qquad \text{and} \qquad
    \ell_{-} = \lim_{k \to -\infty} a_{k}\]
  therefore exist.  The finiteness of the first sum forces \(a_{k}(1 - a_{k}) \to 0\) as
  \(k \to \pm\infty\), so that \(\ell_{+}, \ell_{-} \in \{0, 1\}\).

  We show that the pair \((\ell_{-}, \ell_{+}) \in \{0,1\}^{2}\) pins down the commensuration class
  of \(A\).  Consider the positive direction.  If \(\ell_{+} = 0\), pick \(N \ge 0\) with
  \(a_{k} \le \tfrac{1}{2}\) for all \(k \ge N\).  Then \(a_{k} \le 2 a_{k}(1 - a_{k})\) for such
  \(k\), so by~\eqref{eq:cut-summable}
  \[\lambda\bigl(A \cap [N, \infty)\bigr) = \sum_{k \ge N} a_{k} \le 2 \sum_{k \ge N} a_{k}(1 - a_{k})
    < \infty,\]
  and since \([0, N)\) has finite measure, \(\lambda\bigl(A \cap [0, \infty)\bigr) < \infty\).  If
  instead \(\ell_{+} = 1\), choose \(N \ge 0\) so that \(a_{k} \ge \tfrac{1}{2}\) for all
  \(k \ge N\).  The same estimate applied to \(1 - a_{k}\) gives
  \(\sum_{k \ge N}(1 - a_{k}) < \infty\).  The finitely many intervals with \(0 \le k < N\)
  contribute finite measure, so \(\lambda\bigl([0, \infty) \setminus A\bigr) < \infty\).  The negative
  direction is identical, with \([0, \infty)\) replaced by \((-\infty, 0]\) and \(\ell_{+}\) by
  \(\ell_{-}\): the case \(\ell_{-} = 0\) gives \(\lambda\bigl(A \cap (-\infty, 0]\bigr) < \infty\),
  and the case \(\ell_{-} = 1\) gives \(\lambda\bigl((-\infty, 0] \setminus A\bigr) < \infty\).

  Combining the two directions according to the value of \((\ell_{-}, \ell_{+})\) yields finiteness
  of the symmetric difference of \(A\) with one of the four reference sets:
  \begin{align*}
    (\ell_{-}, \ell_{+}) = (0,0) &: \lambda(A \symdiff \varnothing) < \infty, &
    (\ell_{-}, \ell_{+}) = (1,0) &: \lambda\bigl(A \symdiff (-\infty,0]\bigr) < \infty, \\
    (\ell_{-}, \ell_{+}) = (1,1) &: \lambda(A \symdiff \R) < \infty, &
    (\ell_{-}, \ell_{+}) = (0,1) &: \lambda\bigl(A \symdiff [0,+\infty)\bigr) < \infty.
  \end{align*}
  As the four reference sets are pairwise noncommensurate, \(A\) is commensurate with exactly one of
  them.
\end{proof}

\section{Belinskaya's theorem for measure-preserving flows}
\label{sec:proof-belinskaya-flows}

We begin by recalling the relevant facts and notation from~\cite{LMS}.  Let \(\mathcal{F} \acts X\)
be a free measure-preserving \(\R\)-flow on a standard probability space \((X, \mu)\).  By the
classical results of Mackey and Ramsay~\cite{Mac,Ram}, we may regard \(\mathcal{F}\) as a free Borel
action of \(\R\) on a standard Borel space.  As usual, transformations are identified modulo null
sets, and equalities of actions and orbit relations are understood after restriction to invariant
conull Borel sets.  Throughout this
section, we use the additive notation \(x + r\) for the action of \(\mathcal{F}\), where \(x \in X\)
and \(r \in \R\).  Let \(\eqr\) denote its orbit equivalence relation:
\[\eqr = \{(x,y) : x + r = y \text{ for some } r \in \R\}.\]
Identifying the orbit of a point \(x\) with \(\R\) through the map \(r \mapsto x + r\), Lebesgue
measure on \(\R\) transports to a \(\sigma\)-finite measure \(\lambda_{x}\) on the orbit \([x]_{\eqr}\):
\[\lambda_{x}(B) = \lambda(\{r \in \R : x + r \in B\}).\]
These measures satisfy \(\lambda_{x} = \lambda_{y}\) whenever \(x \eqr y\) and are used to define a
\(\sigma\)-finite measure \(M\) on \(\eqr\) by
\[M(A) = \int_{X} \lambda_{x}(A_{x})\, d\mu(x), \qquad A_{x} = \{y : (x,y) \in A\}.\]
The flow linearly orders each orbit by declaring \(x \le y\) whenever \(y \in x + [0, \infty)\).  We
write \(s(x) = x + [0,\infty)\) for the right half-orbit of \(x\) and set
\[\eqr^{\le} = \{(x,y) \in \eqr : x \le y\}.\]

Write \(\fgr{\mathcal{F}}\) and \(\lfgr{\mathcal{F}}\) for the full group and the \(\LL^{1}\) full
group of \(\mathcal{F}\), respectively.  Every element \(T\) of \(\fgr{\mathcal{F}}\) has an
associated cocycle
\(\rho_{T} : X \to \R\), defined by \(Tx = x + \rho_{T}(x)\).  The \(\LL^{1}\) norm of \(T\) is
given by \(\norm{T}_{1} = \int_{X}\abs{\rho_{T}}\, d\mu\).  For \(T \in \fgr{\mathcal{F}}\), let
\(r_{T}\) be the measure-preserving transformation of \((\eqr, M)\) given by
\(r_{T}(x,y) = (x, Ty)\).  The \(\LL^{1}\) norm admits the following description~\cite[Prop.~6.8]{LMS}:
\begin{equation}
  \label{eq:norm-M}
  \norm{T}_{1} = M\bigl(\eqr^{\le} \symdiff r_{T}(\eqr^{\le})\bigr).
\end{equation}
The cited proposition uses \(\eqr^{\ge}\) instead; the formulas are equivalent by taking complements,
since the diagonal is \(M\)-null.
Since the fiber \(\bigl(\eqr^{\le} \symdiff r_{T}(\eqr^{\le})\bigr)_{x}\) is equal to
\(s(x) \symdiff Ts(x)\), Equation~\eqref{eq:norm-M} gives
\begin{equation}
  \label{eq:norm-M-half-orbit}
  \norm{T}_{1} = \int_{X} \lambda_{x}\bigl(s(x) \symdiff Ts(x)\bigr) \, d\mu(x).
\end{equation}
In particular, an element \(T \in \fgr{\mathcal{F}}\) belongs to the \(\LL^{1}\) full group
\(\lfgr{\mathcal{F}}\) precisely when the integral in~\eqref{eq:norm-M-half-orbit} is finite.

For \(\alpha \in \R^{*} = \R \setminus \{0\}\), let \(m_{\alpha}\) be the automorphism of \(\R\)
given by multiplication by \(\alpha\): \(r \mapsto \alpha r\).  The flow
\(\mathcal{F} \circ m_{\alpha}\) is given by
\((x,r) \mapsto x + \alpha r\) and is viewed as a time rescaling of \(\mathcal{F}\), including time
reversal when \(\alpha < 0\).

We will use the flow version of Katznelson's criterion~\cite[Thm.~10.9]{LMS}.  Suppose that two
free measure-preserving flows \(\mathcal{F}_{1}\) and \(\mathcal{F}_{2}\) share their orbits and
induce the same orbit measures \(\lambda_{x}\).  If their right half-orbits \(s_{1}(x)\) and
\(s_{2}(x)\) satisfy
\[\lambda_{x}\bigl(s_{1}(x) \symdiff s_{2}(x)\bigr) < \infty
  \quad \text{for \(\mu\)-almost every \(x\),}\]
then \(\mathcal{F}_{1}\) and \(\mathcal{F}_{2}\) are conjugate.

As above, we identify a flow with its subgroup of time maps when using subgroup notation.

\begin{theorem}[store=belinskaya-flows-same-orbits]
  \label{thm:belinskaya-flows-same-orbits}
  Let \(\mathcal{F}\) and \(\mathcal{F}'\) be free ergodic measure-preserving flows on \((X,\mu)\)
  that share the same orbits, \(\eqr_{\mathcal{F}} = \eqr_{\mathcal{F}'}\).  If
  \(\mathcal{F}' \le \lfgr{\mathcal{F}}\), then there exists \(\alpha \in \R^{*}\) such that
  \(\mathcal{F}'\) and \(\mathcal{F} \circ m_{\alpha}\) are conjugate.
\end{theorem}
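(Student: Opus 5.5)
The plan is to reduce to the flow version of Katznelson's criterion by first matching orbit measures and then using Corollary~\ref{cor:commensuration-criterion} on each orbit. Since both flows are free with the same orbits, on each orbit the flow \(\mathcal{F}'\) gives a parametrization \(r \mapsto x +' r\). Composing with the \(\mathcal{F}\)-parametrization yields a map \(\phi_x : \R \to \R\) defined by \(x +' r = x + \phi_x(r)\), i.e.\ \(\phi_x(r) = \rho_{T'_r}(x)\), where \(T'_r\) is the time-\(r\) map of \(\mathcal{F}'\).

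\emph{Step 1: \(\phi_x\) is affine.} Because \(\mathcal{F}'\) is a measure-preserving flow inside the full group of the ergodic flow \(\mathcal{F}\), the orbit measures \(\lambda'_x\) of \(\mathcal{F}'\) should be a constant multiple \(c\,\lambda_x\) of those of \(\mathcal{F}\). The Radon--Nikodym derivative of \(\lambda'_x\) with respect to \(\lambda_x\) defines an \(\eqr\)-invariant or \(\mathcal{F}\)-cocycle-type function. Measure preservation of the time maps, via mass transport for \(M\), should force it to be invariant, and ergodicity then makes it a constant \(c>0\). I expect this to be available from \cite{LMS}, where orbit equivalent flows are compared. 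Replacing \(\mathcal{F}'\) by \(\mathcal{F}' \circ m_{c}\) (or \(m_{1/c}\)), we may assume \(\lambda'_x = \lambda_x\). Then \(\phi_x\) is a measure-preserving bijection of \(\R\), but it need not be affine. This is fine: Katznelson's criterion only needs half-orbits to be commensurate, not that the parametrizations agree.

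\emph{Step 2: apply the norm formula.} For \(t \in [0,1]\), the hypothesis gives \(T'_t \in \lfgr{\mathcal{F}}\), and the norm \(t \mapsto \norm{T'_t}_1\) is bounded on \([0,1]\). The map \(t\mapsto T'_t\) is continuous into the Polish group \(\lfgr{\mathcal{F}}\), or one can argue via subadditivity and a Baire/measurability argument. By~\eqref{eq:norm-M-half-orbit} and Tonelli,
\[
\int_X \int_0^1 \lambda_x\bigl(s(x) \symdiff T'_t s(x)\bigr)\,dt\,d\mu(x) = \int_0^1 \norm{T'_t}_1\,dt < \infty,
\]
so for a.e.\ \(x\), and by invariance for a.e.\ \(x\) together with almost all its orbit, the inner integral is finite. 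Transport the orbit of \(x\) to \(\R\) via \(\mathcal{F}'\), i.e.\ via \(r \mapsto x+'r\), which preserves measure by Step 1. Let \(A_x = \{ r : x +' r \in s(x)\}\). Then \(T'_t s(x)\) corresponds to \(A_x + t\), so \(\int_0^1 \lambda(A_x \symdiff (A_x+t))\,dt < \infty\). By Corollary~\ref{cor:commensuration-criterion}, \(A_x\) is commensurate with one of \(\varnothing,\R,(-\infty,0],[0,\infty)\).

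\emph{Step 3: exclude the degenerate cases and choose an orientation.} The sets \(s(x)\) and its complement both have infinite \(\lambda_x\)-measure, so \(A_x\) is commensurate with neither \(\varnothing\) nor \(\R\). Thus \(A_x\) is commensurate with \([0,\infty)\), meaning \(s(x)\) is commensurate with \(s'(x)\), or with \((-\infty,0]\), meaning \(s(x)\) is commensurate with the right half-orbit of \(\mathcal{F}'\circ m_{-1}\). The set of \(x\) in the first case is \(\mathcal{F}\)-invariant: moving \(x\) within its orbit changes \(s(x)\) and \(s'(x)\) by sets of finite measure. By ergodicity, one case holds almost everywhere. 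After possibly composing with \(m_{-1}\), Katznelson's criterion \cite[Thm.~10.9]{LMS} gives that \(\mathcal{F}\) is conjugate to \(\mathcal{F}'\circ m_{\pm c^{\pm1}}\). Equivalently, \(\mathcal{F}'\) is conjugate to \(\mathcal{F}\circ m_\alpha\) for a suitable \(\alpha \in \R^*\).

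The main obstacle is Step 1, namely showing that the orbit measures agree up to a global constant. This is where ergodicity and measure preservation must be combined via a mass-transport argument. A secondary technical point is the measurability and boundedness of \(t\mapsto\norm{T'_t}_1\) on \([0,1]\).
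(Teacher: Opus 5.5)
Your proposal is correct and follows essentially the paper's proof. It normalizes the orbit measures using \cite[Prop.~4.13]{LMS} and ergodicity, transports each orbit to \(\R\) via the \(\mathcal{F}'\)-parametrization so that \(\lambda_{x}\bigl(s(x)\symdiff T'_{t}s(x)\bigr)=\lambda\bigl(A_{x}\symdiff(A_{x}+t)\bigr)\), and bounds \(\int_{0}^{1}\norm{T'_{t}}_{1}\,dt\); for that bound the paper uses your second option, namely measurability via Tonelli, subadditivity, symmetry and Steinhaus's theorem. It then applies Corollary~\ref{cor:commensuration-criterion}, fixes the orientation by ergodicity, and concludes with Katznelson's criterion.

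The differences are cosmetic: you rescale and reverse \(\mathcal{F}'\) instead of \(\mathcal{F}\), and the title of your Step~1 (``\(\phi_{x}\) is affine'') is a misnomer that the step itself correctly retracts.
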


\begin{proof}
  The roles of the flows are asymmetric, and we view the orbits from the perspective of
  \(\mathcal{F}\).  Write, as before, \(x + r\) for the action of \(\mathcal{F}\), and retain the
  convention that \(x \le y\) if \(y = x + r\) for some \(r \ge 0\).  Let
  \((\lambda_{x})_{x \in X}\) and \((\lambda'_{x})_{x \in X}\) be the orbit measures induced by
  \(\mathcal{F}\) and \(\mathcal{F}'\), respectively.  We view the time maps of \(\mathcal{F}'\) as a
  one-parameter family of measure-preserving transformations
  \(T_{t} \in \lfgr{\mathcal{F}}\), \(t \in \R\).  Since
  \(\mathcal{F}' \le \fgr{\mathcal{F}}\) and \(\R\) is unimodular, its time maps preserve
  \(\lambda_{x}\) on almost every orbit~\cite[Prop.~4.13]{LMS}.  Consequently, there is a measurable
  \(\eqr\)-invariant function \(c : X \to (0,\infty)\) such that
  \(\lambda_{x} = c(x)\lambda'_{x}\) (see the discussion preceding~\cite[Thm.~10.9]{LMS}).
  Ergodicity gives \(c(x) = c > 0\) almost everywhere.  Scalar time change does not alter the
  \(\LL^{1}\) full group as a set, while the orbit measure of
  \(\mathcal{F} \circ m_{c}\) is \(c^{-1}\lambda_{x}\).  Thus, after replacing \(\mathcal{F}\) by
  \(\mathcal{F} \circ m_{c}\) and relabeling the notation, we may assume that
  \(\lambda_{x} = \lambda'_{x}\).

  After this normalization, it suffices to prove that \(\mathcal{F}'\) is conjugate either to
  \(\mathcal{F}\) or to its time reversal \(\mathcal{F} \circ m_{-1}\).  Undoing the normalization
  will then give the result with \(\alpha = \pm c\).

  Let \(N(t)\) denote the \(\LL^{1}\) norm of
  \(T_{t}\) viewed as an element of \(\lfgr{\mathcal{F}}\).  Applying~\eqref{eq:norm-M-half-orbit}
  to \(\mathcal{F}\) and \(T_{t}\) gives
  \begin{equation}
    \label{eq:norm-F-prime}
    N(t) = \norm{T_{t}}_{1} = \int_{X} \lambda_{x}\bigl(s(x) \symdiff T_{t}
      s(x)\bigr)\, d\mu(x) < \infty.
  \end{equation}
  Equation~\eqref{eq:norm-F-prime} is expressed in terms of \(\mathcal{F}\), and \(\lambda_{x}\) is
  the orbit measure induced by \(\mathcal{F}\).  However, now that the flows have been rescaled to
  induce the same measures on their orbits, \(\lambda_{x} = \lambda'_{x}\), we can also interpret
  \(\lambda_{x}\) as the orbit measure induced by \(\mathcal{F}'\):
  \[\lambda_{x}(s(x) \symdiff T_{t}s(x)) = \lambda(\{r \in \R : T_{r}x \in s(x) \symdiff
    T_{t}s(x)\}).\]
  Consider the Borel set
  \[A = \{(x,u) \in X \times \R : (x, T_{u}x) \in \eqr^{\le}\} = \{(x,u) \in X \times \R : x \le
    T_{u}x\}.\]
  If \(A_{x} = \{u \in \R : (x,u) \in A\}\) is the fiber of \(A\) over \(x\), then
  \(T_{r}x \in T_{t}s(x)\) if and only if \(T_{r-t}x \in s(x)\), and hence
  \[\{r \in \R : T_{r}x \in s(x) \symdiff
    T_{t}s(x)\} = A_{x} \symdiff (A_{x} + t).\]
  Therefore, Equation~\eqref{eq:norm-F-prime} becomes
  \begin{equation}
    \label{eq:norm-integrand}
    N(t) = \int_{X} \lambda\bigl(A_{x} \symdiff (A_{x} + t)\bigr)\, d\mu(x).
  \end{equation}
  Since
  \[\lambda\bigl(A_{x} \symdiff (A_{x} + t)\bigr)
    = \int_{\R} \abs{\1_{A}(x,u) - \1_{A}(x, u - t)}\, du,\]
  the function \(N\) is measurable by Tonelli's theorem.

  The map \(t \mapsto T_{t}\) is a one-parameter subgroup, and so \(N\), being a norm, is subadditive,
  \[N(t + s) \le N(t) + N(s),\]
  and satisfies \(N(-t) = N(t)\).  A finite, measurable, symmetric, subadditive function on \(\R\)
  is bounded on compact sets.  Indeed, some level set \(E = \{t \in [0,1] : N(t) \le K\}\) has
  positive measure.  By Steinhaus's theorem~\cite{Ste} (see also~\cite{Str} and~\cite[Exer.~1.6.8]{Tao}), the
  difference set \(E-E\) contains an interval \((-\delta,\delta)\).  If \(u,v \in E\), then
  \[N(u-v) \le N(u) + N(-v) \le 2K.\]
  Thus \(N \le 2K\) on \((-\delta,\delta)\).  Choosing a positive integer \(n\) with \(1/n < \delta\),
  subadditivity gives
  \[N(t) \le n N(t/n) \le 2Kn \quad \text{for every } t \in [0,1].\]

  In particular, \(\int_{0}^{1} N(t)\, dt < \infty\).  Integrating~\eqref{eq:norm-integrand} over
  \(t \in [0,1]\) and applying Tonelli's theorem once more, we obtain
  \begin{equation}
    \label{eq:Ix-finite}
    I_{x} = \int_{0}^{1} \lambda\bigl(A_{x} \symdiff (A_{x} + t)\bigr)\, dt < \infty \quad \text{for
        \(\mu\)-almost every \(x\).}
  \end{equation}

  Fix \(x\) with \(I_{x} < \infty\).  By Corollary~\ref{cor:commensuration-criterion},
  \(A_{x} \subseteq \R\) is commensurate with exactly one of \(\varnothing\), \(\R\), \((-\infty,0]\),
  and \([0,\infty)\).  As \(\lambda(A_{x}) = \lambda_{x}(s(x)) = \infty\) and
  \(\lambda(\R \setminus A_{x}) = \lambda_{x}([x]_{\eqr} \setminus s(x)) = \infty\), the first two
  are excluded, so \(A_{x}\) is commensurate with \([0,\infty)\) or with \((-\infty,0]\).  Which
  alternative occurs is an \(\eqr\)-invariant property of \(x\).  Indeed, if \(y = T_{v}x\), then
  \[A_{y} + v = \{r \in \R : T_{r}x \in s(y)\}.\]
  Since \(s(y)\) differs from \(s(x)\) by a bounded half-orbit segment of finite
  \(\lambda_{x}\)-measure, \(A_{y}\) is commensurate with a translate of \(A_{x}\), hence with the
  same ray.  By ergodicity, this \(\eqr\)-invariant choice is constant \(\mu\)-almost everywhere.
  Replacing
  \(\mathcal{F}\) by \(\mathcal{F} \circ m_{-1}\) replaces \(s(x)\), up to its endpoint, by its
  complement in \([x]_{\eqr}\).  Therefore, by~\eqref{eq:Ix-finite}, after reversing
  \(\mathcal{F}\) if necessary, we may assume that
  \begin{equation}
    \label{eq:Ax-commensurate}
    \lambda\bigl(A_{x} \symdiff [0,\infty)\bigr) < \infty \quad \text{for \(\mu\)-almost every \(x\)}.
  \end{equation}

  Let, as before, \(s(x) = x + [0,\infty)\), and set \(s'(x) = \{T_{t}x : t \ge 0\}\) to be the
  right half-orbit of \(x\) under \(\mathcal{F}'\).  Then
  \[\lambda_{x}\bigl(s(x) \symdiff s'(x)\bigr)
    = \lambda\bigl(\{r \in \R : T_{r}x \in s(x) \symdiff s'(x)\}\bigr).\]
  Since \(A_{x} = \{r : T_{r}x \in s(x)\}\),
  \begin{displaymath}
    \{r \in \R : T_{r}x \in s(x) \symdiff s'(x)\} = A_{x} \symdiff [0,\infty).
  \end{displaymath}
  By~\eqref{eq:Ax-commensurate}, we conclude that
  \(\lambda_{x}\bigl(s(x) \symdiff s'(x)\bigr) < \infty\) for \(\mu\)-almost every \(x \in X\).
  The set of \(x\) for which this holds is \(\eqr\)-invariant and conull, so after discarding a
  null set, Katznelson's criterion above applies and yields a conjugacy between the normalized
  \(\mathcal{F}\) and \(\mathcal{F}'\).  Undoing the scalar time changes, \(\mathcal{F}'\) and
  \(\mathcal{F} \circ m_{\alpha}\) are conjugate for some \(\alpha \in \R^{*}\).
\end{proof}

\begin{remark}
  \label{rem:conjugacy-no-ergodicity}
  In the setting of Theorem~\ref{thm:belinskaya-flows-same-orbits}, ergodicity was used twice: to
  make the scaling function \(c(x)\) constant and to make the choice between \([0,\infty)\) and
  \((-\infty,0]\) constant.
  Without ergodicity, the measurable \(\eqr\)-invariant scaling function and orientation choice may
  vary between orbits.  The argument of
  Theorem~\ref{thm:belinskaya-flows-same-orbits} gives a measurable \(\eqr\)-invariant function
  \(\alpha : X \to \R^{*}\) such that \(\mathcal{F}'\) is conjugate to the flow
  \(\mathcal{F}_{\alpha}\) defined by \((x,r) \mapsto x + \alpha(x) r\).
\end{remark}

Finally, we note that the standard reconstruction techniques show that an abstract isomorphism of
the \(\LL^{1}\) full groups of ergodic flows yields a conjugacy of the flows up to rescaling.

\begin{theorem}[store=belinskaya-flows]
  \label{thm:belinskaya-flows}
  Let \(\mathcal{F}_{i}\) be a free ergodic measure-preserving flow on a standard probability space
  \((X_i,\mu_i)\), for \(i=1,2\).  Suppose that their \(\LL^{1}\) full groups
  \(\lfgr{\mathcal{F}_{1}}\) and \(\lfgr{\mathcal{F}_{2}}\) are isomorphic as abstract groups.  Then
  there exists \(\alpha \in \R^{*}\) such that \(\mathcal{F}_{2}\) and
  \(\mathcal{F}_{1} \circ m_{\alpha}\) are conjugate.
\end{theorem}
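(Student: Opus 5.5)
The plan is to reduce Theorem~\ref{thm:belinskaya-flows} to Theorem~\ref{thm:belinskaya-flows-same-orbits} using the reconstruction result of~\cite{LMS}. Ergodic free flows are actions of \(\R\), a compactly generated locally compact Polish group. Hence~\cite[Prop.~4.21]{LMS} applies: an abstract isomorphism \(\Phi : \lfgr{\mathcal{F}_{1}} \to \lfgr{\mathcal{F}_{2}}\) yields that \(\mathcal{F}_{1}\) and \(\mathcal{F}_{2}\) are \(\LL^{1}\) orbit equivalent.

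Concretely, I would first use the Dye-type reconstruction underlying~\cite[Prop.~4.21]{LMS}. Automatic continuity of the isomorphism together with a spatial realization in the spirit of Dye's theorem gives a measure-preserving isomorphism \(S : (X_{1},\mu_{1}) \to (X_{2},\mu_{2})\) with \(\Phi(T) = STS^{-1}\). Transport \(\mathcal{F}_{1}\) through \(S\) to obtain a flow \(\mathcal{F} = S\mathcal{F}_{1}S^{-1}\) on \(X_{2}\). This flow is conjugate to \(\mathcal{F}_{1}\), and it satisfies \(\lfgr{\mathcal{F}} = S\lfgr{\mathcal{F}_{1}}S^{-1} = \lfgr{\mathcal{F}_{2}}\).

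Next, I would check the hypotheses of Theorem~\ref{thm:belinskaya-flows-same-orbits} for the pair \(\mathcal{F}\), \(\mathcal{F}' = \mathcal{F}_{2}\). Freeness and ergodicity are preserved under conjugation. Each time map of \(\mathcal{F}_{2}\) has constant cocycle, so it lies in \(\lfgr{\mathcal{F}_{2}} = \lfgr{\mathcal{F}}\); thus \(\mathcal{F}' \le \lfgr{\mathcal{F}}\). The orbit equivalence relations coincide because each relation is recovered from the corresponding \(\LL^{1}\) full group, which is the same group. Indeed, \(x\) and \(y\) lie in the same orbit exactly when some element of the group maps \(x\) to \(y\). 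For flows this needs a small argument: \(\LL^{1}\) full groups contain all time maps, and their elements move points within orbits. So each relation is generated by its \(\LL^{1}\) full group up to null sets, in the sense that the orbit of a.e.\ point under the group equals its orbit under the flow. Theorem~\ref{thm:belinskaya-flows-same-orbits} then gives \(\alpha \in \R^{*}\) with \(\mathcal{F}_{2}\) conjugate to \(\mathcal{F} \circ m_{\alpha}\). Since \(\mathcal{F} \circ m_{\alpha}\) is conjugate to \(\mathcal{F}_{1} \circ m_{\alpha}\) via \(S\), the theorem follows.

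The main obstacle is the first step, producing the spatial implementation \(S\). I would not reprove it but cite~\cite[Prop.~4.21]{LMS}, which states exactly that isomorphic \(\LL^{1}\) full groups of such ergodic actions give \(\LL^{1}\) orbit equivalence. Unpacking its definition gives, after conjugating one flow, equal orbits and mutual containment in each other's \(\LL^{1}\) full groups. That is more than the one-sided hypothesis Theorem~\ref{thm:belinskaya-flows-same-orbits} needs.
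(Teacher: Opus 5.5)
Your proposal follows the paper's proof: \cite[Prop.~4.21]{LMS} gives $\LL^{1}$ orbit equivalence, which after conjugating $\mathcal{F}_{1}$ onto $(X_{2},\mu_{2})$ yields equal orbits and $\mathcal{F}_{2}\le\lfgr{\mathcal{F}_{1}}$ (the paper cites \cite[Cor.~4.24]{LMS} for exactly this form), and Theorem~\ref{thm:belinskaya-flows-same-orbits} then finishes. Rely on that citation, as your last paragraph does, rather than on your direct claim that equal $\LL^{1}$ full groups force equal orbit relations: composing $S$ with a Borel involution that swaps points of a null cross-section between different orbits leaves every time map unchanged modulo null sets, yet can make the orbit relations differ on every invariant conull set, so the matching of orbits has to come from the conjugacy supplied by the cited result.
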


\begin{proof}
  By~\cite[Prop.~4.21]{LMS}, the isomorphism \(\lfgr{\mathcal{F}_{1}} \cong \lfgr{\mathcal{F}_{2}}\)
  implies that \(\mathcal{F}_{1}\) and \(\mathcal{F}_{2}\) are \(\LL^{1}\) orbit equivalent in the
  sense of~\cite[Def.~4.19]{LMS}: the isomorphism is realized spatially through Fremlin's
  reconstruction theorem~\cite[384D]{Fre}, using that \(\LL^{1}\) full groups of ergodic flows have
  many involutions~\cite[Lem.~3.7]{LMS}.  By~\cite[Cor.~4.24]{LMS}, after conjugating
  \(\mathcal{F}_{1}\) to an action on \((X_2,\mu_2)\), we may assume that the two flows share the same
  orbits,
  \(\eqr_{\mathcal{F}_{1}} = \eqr_{\mathcal{F}_{2}} = \eqr\), and that
  \(\mathcal{F}_{2} \le \lfgr{\mathcal{F}_{1}}\).  Theorem~\ref{thm:belinskaya-flows-same-orbits}
  now implies that \(\mathcal{F}_{2}\) is conjugate to
  \(\mathcal{F}_{1} \circ m_{\alpha}\) for some \(\alpha \in \R^{*}\).
\end{proof}

\end{document}